\renewcommand*\subjclass[2][2000]{%
  \def\@subjclass{#2}%
  \@ifundefined{subjclassname@#1}{%
    \ClassWarning{\@classname}{Unknown edition (#1) of Mathematics
      Subject Classification; using '1991'.}%
  }{%
    \@xp\let\@xp\subjclassname\csname subjclassname@#1\endcsname
  }%
}
\newtheorem{theorem}{Theorem}[section]
\newtheorem{lemma}[theorem]{Lemma}
\newtheorem*{lemma*}{Lemma}
\newtheorem{proposition}[theorem]{Proposition}
\newtheorem{corollary}[theorem]{Corollary}
\theoremstyle{definition}
\newtheorem{conjecture}[theorem]{Conjecture}
\theoremstyle{remark}
\numberwithin{equation}{section}
\def\XXint#1#2#3{{\setbox0=\hbox{$#1{#2#3}{\int}$}
\vcenter{\hbox{$#2#3$}}\kern-.5\wd0}}
\def\le{\leqslant}
\def\ge{\geqslant}
\begin{document}

\title[{Gradient of solution to  Poisson equation and related operators}]{Gradient of solution of the Poisson equation in the unit ball and related operators}
\subjclass{Primary 35J05; Secondary 47G10}


\keywords{M\"obius transformations, Poisson equation, Newtonian
potential, Cauchy transform, Bessel function}

 \author{ David Kalaj  }
\address{ University of Montenegro, Faculty of Mathematics, Dzordza Va\v singtona  bb, 81000 Podgorica, Montenegro}
 \email{  davidk@ac.me}

 \author{ Djordjije Vujadinovi\'c }
\address{ University of Montenegro, Faculty of Mathematics, Dzordza Va\v singtona  bb, 81000 Podgorica, Montenegro}
 \email{  djordjijevuj@t-com.me}
 \date{}
\begin{abstract}
In this paper we determine the $L^1\to L^1$  and $L^{\infty}\to L^\infty$ norms of an integral operator $\mathcal{N}$ related to the gradient of the solution of Poisson equation in the unit ball with vanishing boundary data in sense of distributions.
 \end{abstract}

\maketitle

\section{Introduction and Notation}

We denote by $\mathbf{B}={B}^{n}$ and $\mathbf{S}={S}^{n-1}$ the unit ball and the unit sphere in $\mathbf{R}^n$ respectively.  We will assume that $n>2$ (the case $n=2$ has been already treated in \cite{Kalaj, Kalaj1}). By the vector norm $|\cdot|$ we  consider  the standard Euclidean distance $|x|=(\sum_{i=1}^{n}x_{i}^{2})^{\frac{1}{2}}.$

The norm of an operator $T: X\rightarrow Y$ defined on the normed  space $X$ with image in the normed space $Y$
is defined as
$$\|T\|=\sup\{\|Tx\|:\|x\|=1\}.$$

Let $G$ be the Green function, i.e., the function
$$G(x,y)=c_{n}\left(\frac{1}{|x-y|^{n-2}}-\frac{1}{[x,y]^{n-2}}\right),$$ where $$c_{n}=\frac{1}{(n-2)\omega_{n-1}},$$ where $\omega_{n-1}$ is the Hausdorff measure of $S^{n-1}$ and $$ [x,y]:=|x|y|-y/|y||=|y|x|-x/|x||.$$

The Poisson kernel $P$ is defined
$$P(x,\eta)=\frac{1-|x|^2}{|x-\eta|^{n}},\enspace |x|<1,\eta\in S^{n-1}.$$
We are going to consider the Poisson equation
\begin{equation}\label{eqpo}\left\{\begin{array}{rr}
                     \triangle u(x) &= g, x\in \Omega\\
                        &u|_{\partial \Omega}=f
                     \end{array}\right.\end{equation}
 where $f:S^{n-1}\rightarrow \mathbf{R}$ is bounded integrable function on the unit sphere $S^{n-1},$ and  $g:B^{n}\rightarrow \mathbf{R}$ is a continuous function.

The solution of the equation in the sense of distributions is given by
\begin{equation}
u(x)=P[f](x)-{\mathcal G}[g](x):=\int_{S^{n-1}}P(x,\eta)f(\eta)d\sigma(\eta)-\int_{B^{n}}G(x,y)g(y)dy,
\end{equation}
 $|x|<1.$
 Here $d\sigma$ is the normalized Lebesgue $n-1$ dimensional measure of the unit sphere $\mathbf{S}=S^{n-1}$.

Our main focus of observation is related to the special case of Poisson equation with Dirichlet boundary condition \begin{equation}\label{eqpo}\left\{\begin{array}{rr}
                     \triangle u(x) &= g, x\in \Omega\\
                        &u|_{\partial \Omega}=0
                     \end{array}\right.\end{equation} where $g\in L^{\infty}(B^{n}).$  The weak solution is then given by
 \begin{equation}
 u(x)=-{\mathcal G}[g](x)=-\int_{\mathbf{B}}G(x,y)g(y)dy,|x|<1.
 \end{equation}

 The problem of estimating the norm of the operator $\mathcal G$ in case of various $L^{p}-$ spaces was established by both authors in \cite{Kalaj 2}.

 Since $$\nabla_x G(x,y)=c_{n}(2-n)\left(\frac{x-y}{|x-y|^{n}}-\frac{|y|^2x-y}{[x,y]^n}\right),$$ this naturally induces the differential operator related to Poisson equation \begin{align}\label{3.3}\mathcal{D}[g](x):=\nabla u(x)=\frac{1}{\omega_{n-1}}\int_{\mathbf{B}}\left(\frac{x-y}{|x-y|^{n}}-\frac{|y|^2x-y}{[x,y]^n}\right) g(y) dy.\end{align}
 Related to the problem of estimating the norm of the operator $\mathcal{D}$, we are going to observe  the operator $\mathcal{N}:L^{\infty}(B^{n})\rightarrow L^{\infty}(B^{n})$ defined by
 \begin{align}\label{3.4}\mathcal{N}[f](x)=\frac{1}{\omega_{n-1}}\int_{\mathbf{B}}\left|\frac{x-y}{|x-y|^{n}}-\frac{|y|^2x-y}{[x,y]^n}\right| f(y) dy.\end{align}
  The main goal of our paper is related to estimating various norms of the integral operator and $\mathcal{N}$. Then we use those results to obtain some norm estimates of the operator  $\mathcal{D}.$  The compressive study of this problem for $n=2$ has been done by the first author in \cite{Kalaj, Kalaj1} and by Dostani\'c in \cite{dost,dost1}. For related results we refer to the papers \cite{ah,akl}.

   \subsection{Gauss hypergeometric function}
    Through the paper we will often use the properties of the hypergeometric functions. First of all, the hypergeometric function $F(a,b,c,t)={_2F_1} (a,b;c;t)$ is defined by the series expansion
  $$\sum_{n=0}^{\infty}\frac{(a)_{n}(b)_{n}}{n!(c)_n}t^{n},\enspace \mbox{for}\enspace|t|<1,$$
   and by the continuation elsewhere. Here $(a)_n$ denotes shifted factorial, i.e. $(a)_{n}=a(a+1)...(a+n-1)$ and $a$ is any real number.\\
   The following identity will be used in proving the main results of this paper:

\begin{equation}\label{formula}\int_0^{\pi} \frac{\sin^{\mu-1}t}{(1+r^2-2
r \cos t)^{\nu}}
dt=\mathrm{B}\left(\frac{\mu}{2},\frac{1}{2}\right)F\left(\nu,\nu
+\frac{1-\mu}{2};\frac{1+\mu}{2},r^2\right)\end{equation} (see,
e.g., Prudnikov, Brychkov and Marichev \cite[2.5.16(43)]{pbm}), where $\mathrm{B}$ is the beta function.

By using the Chebychev's inequality one can easily obtain the following inequality for Gamma function (see \cite{drag}).
\begin{proposition} Let $m$, $p$ and $k$ be real numbers with $m,p > 0$ and $p > k >- m$: If
\begin{equation}\label{(3.9)} k (p -m-k)\ge 0 (\le 0)\end{equation}
then we have
\begin{equation}\label{(3.10)} \Gamma(p)\Gamma(m) \ge (\le ) \Gamma(p - k)\Gamma (m + k). \end{equation}
 \end{proposition}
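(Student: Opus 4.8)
The plan is to reduce the claim to a comparison of two Beta functions and then to exhibit their difference as a single integral whose integrand has a sign governed exactly by $k(p-m-k)$. First I would record that the hypotheses $m,p>0$ and $p>k>-m$ guarantee that all four arguments $p$, $m$, $p-k$, $m+k$ are strictly positive, so each Gamma value is finite and positive and, in particular, $\Gamma(p+m)>0$. Dividing \eqref{(3.10)} by $\Gamma(p+m)$ and using $\mathrm{B}(a,b)=\Gamma(a)\Gamma(b)/\Gamma(a+b)$ together with $(p-k)+(m+k)=p+m$, I see that \eqref{(3.10)} is equivalent to
\begin{equation*}
\mathrm{B}(p,m)\ge(\le)\,\mathrm{B}(p-k,m+k).
\end{equation*}

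Next I would pass to the representation $\mathrm{B}(a,b)=\int_0^\infty u^{a-1}(1+u)^{-(a+b)}\,du$. The point of this choice is that both Beta functions now carry the \emph{same} denominator $(1+u)^{p+m}$, so their difference is
\begin{equation*}
\mathrm{B}(p,m)-\mathrm{B}(p-k,m+k)=\int_0^\infty\frac{u^{p-1}-u^{p-k-1}}{(1+u)^{p+m}}\,du.
\end{equation*}
I would then fold the tail $\int_1^\infty$ onto $(0,1)$ by the substitution $u\mapsto 1/u$; since the factor $(1+u)^{-(p+m)}$ transforms into $u^{p+m}(1+u)^{-(p+m)}$, a short computation turns the tail into $\int_0^1\bigl(u^{m-1}-u^{m+k-1}\bigr)(1+u)^{-(p+m)}\,du$. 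Combining the two pieces and factoring the numerator as $(u^{k}-1)(u^{p-k-1}-u^{m-1})$ yields the single formula
\begin{equation*}
\mathrm{B}(p,m)-\mathrm{B}(p-k,m+k)=\int_0^1\frac{(u^{k}-1)\,(u^{p-k-1}-u^{m-1})}{(1+u)^{p+m}}\,du.
\end{equation*}

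The conclusion then follows from a sign count on $(0,1)$. For $u\in(0,1)$ the factor $u^{k}-1$ has sign $-\sign(k)$, while $u^{p-k-1}-u^{m-1}$ is positive precisely when the smaller exponent occurs, i.e. when $p-k<m$, so it has sign $-\sign(p-m-k)$; hence the product has sign $\sign\bigl(k(p-m-k)\bigr)$ throughout $(0,1)$. Thus \eqref{(3.9)} forces the integrand to be of one sign, and the asserted inequality \eqref{(3.10)} (with the matching direction in the $\le$ case) follows at once. The only genuinely delicate step is the folding/symmetrization: it is what collapses the two-tailed integral into one integrand with a transparent sign, and it must be carried out so that the $\ge$ and $\le$ statements emerge simultaneously from the single quantity $\sign(k(p-m-k))$. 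As an alternative I note that the same result can be read off from the strict convexity of $\log\Gamma$ — equivalently, from the monotonicity of the ratio $x\mapsto\Gamma(x+k)/\Gamma(x)$ through the digamma function — which is the Chebyshev-type mechanism underlying the cited estimate.
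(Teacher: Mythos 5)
Your proof is correct, and it takes a genuinely different route from the paper's treatment. I verified the key steps: the hypotheses $m,p>0$ and $p>k>-m$ make all four arguments $p$, $m$, $p-k$, $m+k$ positive (so every exponent in your integrals exceeds $-1$ and each piece converges), the reduction to $\mathrm{B}(p,m)\ge(\le)\,\mathrm{B}(p-k,m+k)$ is exact because $(p-k)+(m+k)=p+m$, the folding substitution $u\mapsto 1/u$ is computed correctly, and the factorization $u^{p-1}-u^{p-k-1}+u^{m-1}-u^{m+k-1}=(u^{k}-1)(u^{p-k-1}-u^{m-1})$ checks out, with the stated signs on $(0,1)$ giving $\sign\bigl(k(p-m-k)\bigr)$ for the integrand. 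The paper, by contrast, offers no written proof at all: it presents the proposition as an easy consequence of Chebyshev's integral inequality, citing Dragomir--Agarwal--Barnett. That argument takes the weight $h(x)=x^{m-1}e^{-x}$ on $(0,\infty)$ and the pair $f(x)=x^{p-m-k}$, $g(x)=x^{k}$, which are synchronous (asynchronous) exactly when $k(p-m-k)\ge 0$ ($\le 0$); Chebyshev's inequality $\int h\,dx\int hfg\,dx\ge\int hf\,dx\int hg\,dx$ then yields \eqref{(3.10)} at once, since the four integrals are $\Gamma(m)$, $\Gamma(p)$, $\Gamma(p-k)$ and $\Gamma(m+k)$. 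Your symmetrized Beta integral is in effect a one-variable shadow of the same mechanism: Chebyshev rests on the sign-definite double kernel $h(x)h(y)\bigl(f(x)-f(y)\bigr)\bigl(g(x)-g(y)\bigr)$, and your factor $(u^{k}-1)(u^{p-k-1}-u^{m-1})$ plays the identical role after pairing $u$ with $1/u$ --- so your closing remark about the ``Chebyshev-type mechanism'' underlying the cited estimate is on target. What your route buys is a fully self-contained elementary proof needing only the Beta integral and a sign count, whereas the cited route buys a two-line derivation that generalizes immediately to other weights and monotone pairs.
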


 \subsection{M\"obius transformations of the unit ball}
 The set of isometries of the hyperbolic unit ball $B^n$ is a  Kleinian
subgroup of all M\"obius transformations of the extended space
$\overline{\mathbf{R}}^n$ onto itself denoted by
$\mathbf{Conf}(\mathbf{B})=\mathbf{Isom}(\mathbf{B})$. We refer
to the Ahlfors' book \cite{Ahl} for detailed survey to this class of
important mappings.
 \begin{equation}\label{mebius}T_{x}y=\frac{(1-|x|^2)(y-x)-|y-x|^{2}x}{[x,y]^2},\end{equation} and \begin{equation}\label{mebnorm} |T_{x}y|=\left|\frac{x-y}{[x,y]}\right|\end{equation}

 \begin{equation}\label{deri}dy=\left(\frac{1-|x|^2}{[z,-x]^2}\right)^{n}dz.\end{equation}

\section{The $L^\infty$ norm of the operator $\mathcal N$}
In this section we are going to find the norm of the operator $\mathcal N,$ defined in \eqref{3.4}, and by using this we estimate the norm of operator $\mathcal{D}$.
\begin{theorem}
\label{thm}
Let ${\mathcal N}:L^{\infty}(\mathbf{B})\rightarrow L^{\infty}(\mathbf{B})$ be the operator defined in \eqref{3.4}. Then
 $$\|\mathcal N\|_{L^\infty\to L^\infty}=\frac{2n \pi^{n/2}}{(n+1)\Gamma(n/2)}.$$
\end{theorem}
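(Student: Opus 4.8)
The plan is to compute the $L^\infty \to L^\infty$ operator norm of $\mathcal{N}$ directly. Since $\mathcal{N}$ is a positive integral operator (its kernel is the absolute value $\bigl|\frac{x-y}{|x-y|^n} - \frac{|y|^2 x - y}{[x,y]^n}\bigr|$, which is nonnegative), its $L^\infty \to L^\infty$ norm is attained at $f \equiv 1$ and equals
\begin{equation}\label{plan:norm}
\|\mathcal{N}\|_{L^\infty \to L^\infty} = \sup_{|x|<1} \frac{1}{\omega_{n-1}} \int_{\mathbf{B}} \left|\frac{x-y}{|x-y|^{n}}-\frac{|y|^2x-y}{[x,y]^n}\right| dy.
\end{equation}
So the whole problem reduces to evaluating the function $\Phi(x) := \frac{1}{\omega_{n-1}}\int_{\mathbf{B}} K(x,y)\, dy$ and maximizing it over the ball. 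I would first try to simplify the kernel: the two terms inside the absolute value are exactly the two pieces of $\nabla_x G(x,y)$ up to the constant, and I expect the Möbius structure to be the right tool. Using \eqref{mebius}, \eqref{mebnorm}, and the Jacobian \eqref{deri}, I would change variables $y = T_x z$ (or its inverse) to straighten out the geometry. The key algebraic hope is that under this substitution the integrand collapses into a much simpler radial expression, because $|T_x y| = |x-y|/[x,y]$ already packages both denominators together.

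Next, once the change of variables is carried out, I expect the integral to reduce to something evaluable in polar coordinates centered appropriately. Writing $z = \rho\zeta$ with $\rho \in (0,1)$ and $\zeta \in S^{n-1}$, the angular integration over the sphere should produce an integral of the form $\int_0^\pi \frac{\sin^{\mu-1}t}{(1+r^2 - 2r\cos t)^\nu}\, dt$, which is exactly the Gauss-hypergeometric evaluation recorded in \eqref{formula}. This is the mechanism by which the beta function $\mathrm{B}(\mu/2, 1/2)$ and values of $F(a,b;c;r^2)$ will enter. I would apply \eqref{formula} with the specific $\mu, \nu$ dictated by the kernel (I anticipate $\nu$ related to $n$ and $\mu = n$ from the surface-measure factor $\sin^{n-2}t$), and then the radial $\rho$-integral should be computable, possibly again via a beta-integral or by recognizing a hypergeometric value at a special argument.

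The crucial structural point I would aim to establish is that $\Phi(x)$ is in fact \emph{constant} in $x$, or at least that its supremum is attained at a transparent point (the center $x=0$ or radially as $|x|\to 1$). If the Möbius substitution genuinely trivializes the $x$-dependence — which the invariance properties of $G$ and the conformal weight in \eqref{deri} strongly suggest — then the supremum in \eqref{plan:norm} is just the value of $\Phi$ at any convenient $x$, e.g.\ $x=0$, where the second kernel term simplifies dramatically (at $x=0$ one has $[0,y] = |y|$ and the kernel becomes $\bigl||{-y}|/|y|^n - (-y)/|y|^n\bigr|$-type, radially symmetric). Evaluating $\Phi(0)$ should then be a one-dimensional radial integral producing $\frac{2n\pi^{n/2}}{(n+1)\Gamma(n/2)}$ after inserting $\omega_{n-1} = 2\pi^{n/2}/\Gamma(n/2)$.

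The main obstacle I anticipate is twofold. First, the absolute value is genuinely awkward: the vector difference $\frac{x-y}{|x-y|^n} - \frac{|y|^2 x - y}{[x,y]^n}$ does not factor trivially, so computing $|\,\cdot\,|$ requires expanding $|x-y|^2$, $[x,y]^2$, and the inner product of the two vector terms, and hoping the cross terms combine into a perfect square or a clean function of $|x|$, $|y|$, and the angle. Verifying that this happens — i.e.\ finding the correct closed form for $K(x,y)$ before integrating — is the computational heart of the argument. Second, even granting the simplification, one must rigorously justify that the supremum over $x$ is attained where claimed; if $\Phi(x)$ is not identically constant, I would need monotonicity in $|x|$, which may itself require Proposition on the Gamma-function inequality \eqref{(3.10)} to control the sign of a derivative or to compare hypergeometric terms. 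I expect that Gamma-function proposition is included precisely to settle such a monotonicity/extremality comparison at the end.
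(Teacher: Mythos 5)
Your opening moves coincide exactly with the paper's: the reduction of the norm to $\sup_{x}K(x)$ with $K(x)=\int_{\mathbf{B}}\bigl|\tfrac{x-y}{|x-y|^{n}}-\tfrac{|y|^2x-y}{[x,y]^n}\bigr|\,dy$, the M\"obius substitution $y=T_{-x}z$ with \eqref{mebnorm} and \eqref{deri}, polar coordinates, and the hypergeometric evaluation \eqref{formula}; you even guessed correctly that the Gamma inequality \eqref{(3.10)} is there to settle the extremality comparison. But there is a genuine gap at what you yourself call the computational heart. Your primary structural hope --- that the substitution makes $K(x)$ constant in $x$, or collapses the absolute value into a clean radial closed form --- is false. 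What the substitution actually produces (the paper's identity \eqref{ide}) is
$$K(x)=(1-|x|^2)\int_{0}^{1}\!dr\int_{\mathbf{S}}\frac{\bigl|rx(r^{n-2}-1)+\xi(r^{n}-1)\bigr|}{|rx+\xi|^{n+2}}\,d\xi,$$
in which the norm of a genuine vector sum survives; no perfect square appears, $K$ is not constant, and your fallback alternative that the supremum might occur as $|x|\to 1$ is ruled out at once by the factor $1-|x|^2$. The missing idea is a sharp pointwise majorization: writing $rx(r^{n-2}-1)+\xi(r^{n}-1)=-(1-r^{n-2})(rx+\xi)+(r^{n}-r^{n-2})\xi$, the triangle inequality gives
$$\bigl|rx(r^{n-2}-1)+\xi(r^{n}-1)\bigr|\le (1-r^{n-2})|rx+\xi|+r^{n-2}-r^{n},$$
\emph{with equality at $x=0$}. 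This splits the integral into two angular integrals of the form $\int_{\mathbf{S}}|rx+\xi|^{-a}\,d\xi$, each of which \eqref{formula} converts into a hypergeometric function of $r^2|x|^2$.

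Even granting that split, your plan supplies no mechanism for the second half of the argument: one must expand the resulting majorant as $a_0+\sum_{m\ge 1}a_m|x|^{2m}$, verify $a_0=\frac{2n\pi^{n/2}}{(n+1)\Gamma(n/2)}$, and prove $a_m<0$ for every $m\ge 1$. The paper does this by reducing $a_m<0$ to an inequality $b_m<1$ for an explicit ratio of Gamma values, applying \eqref{(3.10)} to bound $b_m\le c(m)$, and showing $c$ is increasing with $\lim_{m\to\infty}c(m)=\frac{\sqrt{\pi}\,\Gamma\left(\frac{1+n}{2}\right)}{2\,\Gamma\left(1+\frac{n}{2}\right)}<1$; this is considerably more than a single monotonicity check. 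Sharpness then comes for free because the triangle inequality above is an equality at $x=0$, where the computation is elementary --- though note your evaluation there is off: $[0,y]=|0\cdot|y|-y/|y||=1$, not $|y|$, so the kernel at the origin is $\frac{1}{|y|^{n-1}}-|y|$, whose integral over $\mathbf{B}$ gives the claimed constant (in the paper's normalization, where the $\frac{1}{\omega_{n-1}}$ of \eqref{3.4} is absorbed). In short: right skeleton, correct guesses about the tools, but without the triangle-inequality majorization that is exact at the origin and without the coefficient-sign analysis, the proof does not close.
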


\begin{proof}
At the beginning, let us note that
$$\|\mathcal N\|_{L^\infty\to L^\infty}=\sup_{x\in \mathbf{B}}\int_{\mathbf{B}} \left|\frac{x-y}{|x-y|^{n}}-\frac{|y|^2x-y}{[x,y]^n}\right|dy.$$

So, we need to find
$$\sup_{x\in \mathbf{B}}K(x),$$ where $$K(x)=\int_{\mathbf{B}} \left|\frac{x-y}{|x-y|^{n}}-\frac{|y|^2x-y}{[x,y]^n}\right|dy.$$
Now we are going to use the change of variables $y=T_{-x}z(T_{x}y=z),$ where $T_{-x}:\mathbf{B}\rightarrow \mathbf{B}$ is the M\"{o}bius transform defined by
$$T_{-x}(z)=\frac{(1-|x|^2)(y+x)+x|z+x|^2}{[z,-x]^2}.$$ Now we use the following relations $|T_{x}(y)|=\left|\frac{x-y}{[x,y]}\right|$ and
$$x-T_{-x}(z)=\frac{(1-|x|^2)(-x|z|^2-z)}{[z,-x]^2},\enspace \left|x-T_{-x}(z)\right|=|z|\frac{1-|x|^2}{[z,-x]}.$$
We have that
\begin{equation}
\begin{split}\label{ide}
&\left|\frac{x-y}{|x-y|^{n}}-\frac{|y|^2x-y}{[x,y]^n}\right|\\
&=\frac{1}{|x-y|^{n}}\left|(x-y)-(|y|^2x-y)\left|\frac{x-y}{[x,y]}\right|^n\right|
\\&=\frac{1}{|x-y|^{n}}\left|(x-y)-(|y|^2x-y)|z|^n\right|\\
&=\frac{1}{|x-T_{-x}z|^{n}}\left|(x-T_{-x}z)-(|T_{-x}z|^2x-T_{-x}z)|z|^{n}\right|\\
&=\frac{[z,-x]^{n}}{(1-|x|^2)^{n}|z|^{n}}\left|\frac{(1-|x|^2)(-x|z|^2-z)}{[z,-x]^2}+\frac{(1-|x|^2)(z+x)}{[z,-x]^2}|z|^{n}\right|\\
&=\frac{[z,-x]^{n}}{(1-|x|^2)^{n}|z|^{n}}\frac{(1-|x|^2)}{[z,-x]^{2}}|z|\left|(-x|z|-z/|z|)+|z|^{n-1}(z+x)\right|\\
&=\frac{[z,-x]^{(n-2)}}{(1-|x|^2)^{(n-1)}|z|^{n-1}}\left||z|^{n-1}(z+x)-(x|z|+z/|z|)\right|.
\end{split}
\end{equation}
According to the identity  \eqref{ide}, we have
\begin{equation}
\begin{split}
I=&\sup_{x\in B^n} K(x)\\
&=\sup_{x\in B^n}(1-|x|^2)\int_{0}^{1}dr\int_{\mathbf{S}}\frac{\left |rx(r^{n-2}-1)+\xi(r^{n}-1) \right|}{|rx+\xi|^{n+2}}d\xi.
\end{split}
\end{equation}
Further we have the following simple inequality $$ \left|rx(r^{n-2}-1)+\xi(r^{n}-1)\right|\le (1-r^{n-2})|rx+\xi| +r^{n-2}-r^{n},$$ and thus $$\frac{\left|rx(r^{n-2}-1)+\xi(r^{n}-1)\right|}{|rx+\xi|^{n+2}}\le \frac{(1-r^{n-2})|rx+\xi|}{|rx+\xi|^{n+2}}+
\frac{r^{n-2}-r^n}{|rx+\xi|^{n+2}}.$$
So $$I\le \max_{x\in B^n}(1-|x|^2)\int_{0}^{1}dr\int_{\mathbf{S}}\frac{\left((1-r^{n-2}) |rx+\xi|+r^{n-2}-r^n\right)}{|rx+\xi|^{n+2}}d\xi.$$
Then we have $$\int_S\frac{d\xi}{|rx+\xi|^a}=\frac{\omega_{n-1}}{\int_0^\pi \sin^{n-2} t dt}\int_0^\pi \frac{\sin^{n-2} t dt}{(1+r^2|x|^2+2 r|x| \cos t)^{a/2}}.$$
By \eqref{formula} we obtain \[\begin{split}\int_0^\pi &\frac{\sin^{n-2} t dt}{(1+r^2|x|^2+2 r|x| \cos t)^{a/2}}\\&=\frac{\sqrt{\pi } \Gamma\left(\frac{1}{2} (-1+n)\right)}{\Gamma\left(\frac{n}{2}\right)} { \, F\left(a/2,1-\frac{n}{2}+a/2,\frac{n}{2},r^2 x^2\right)}.\end{split}\]
In view of $$\frac{\omega_{n-1}}{\int_0^\pi \sin^{n-2} t dt}=\frac{\frac{2\pi^{n/2}}{\Gamma[n/2]}}{\frac{\sqrt{\pi} \Gamma[1/2 (-1 + n)]}{\Gamma[n/2]}}=\frac{2\pi^{n/2}}{\sqrt{\pi}\Gamma[1/2 (-1 + n)]},$$ we then infer
 $$\int_S\frac{d\xi}{|rx+\xi|^a}=2\frac{\pi^{n/2} }{\Gamma\left(\frac{n}{2}\right)}\, {F}\left(a/2,1-\frac{n}{2}+a/2,\frac{n}{2},r^2 x^2\right).$$
  Hence $$I\le C_n \sup_{x\in\mathbf{B}}J(x)$$ with \[\begin{split}J(x)&=(1-|x|^2)\int_{0}^{1}(1-r^{n-2}){F}\left(\frac{3}{2},\frac{1+n}{2},\frac{n}{2},r^2 x^2\right)dr\\&+(1-|x|^2)\int_{0}^{1}(r^{n-2}-r^n)\frac{n-(n-4) r^2 |x|^2}{n \left(1-r^2 |x|^2\right)^3}dr=\int_0^1 K_r(x) dr.\end{split}\] Here $$K_r(x)=\sum_{m=0}^\infty A_m(r) |x|^{2m}$$ where $A_0(r)=1-r^n$, $r=|x|$, and for $m\ge 1$
\[\begin{split}A_m(r)&=  \frac{r^{2m-4}}{{2n}}{r^n \left(1-r^2\right) \left(-2 m (-2+n+2 m)+2 (1+m) (n+2 m) r^2\right)}\\&+r^{2m-4}\left(r^2-r^n\right) \left(-2 m (-2+2 m+n)+(1+2 m) (-1+2 m+n) r^2\right) \\ &\quad\times \frac{ \Gamma[\frac{n}{2}]
\Gamma[\frac{1+2m}{2}] \Gamma[ \frac{n+2 m-1}{2}]}{2\sqrt{\pi } m! \Gamma\left(\frac{1+n}{2}\right) \Gamma\left(\frac{n}{2}+m\right)}.\end{split}\]
Thus $$I\le a_0+\sum_{m=1}^\infty a_m |x|^{2m}$$ where $$a_0=\frac{2n \pi^{n/2}}{(n+1)\Gamma(n/2)},$$ and
 \[\begin{split}a_m&= \frac{2 (-3+n) n+4 (-2+n) m}{n (-3+n+2 m) (-1+n+2 m) (1+n+2 m)}\\ & \quad  -\frac{(-2+n) (-3+n+4 m) \Gamma\left(\frac{n}{2}\right) \Gamma\left(-\frac{1}{2}+m\right) \Gamma\left(\frac{1}{2} (-3+n)+m\right)}{8 \sqrt{\pi } \Gamma\left(\frac{1+n}{2}\right) \Gamma(1+m) \Gamma\left(\frac{n}{2}+m\right)}.\end{split}\]
Then $a_m<0$ if and only if $$b_m:=\frac{2 ((-3+n) n+2 (-2+n) m) \sqrt{\pi } m! \Gamma\left(\frac{1+n}{2}\right) \Gamma\left(\frac{n}{2}+m\right)}{(-2+n) n (-3+n+4 m) \Gamma\left(\frac{n}{2}\right) \Gamma\left(-\frac{1}{2}+m\right) \Gamma\left(\frac{3+n}{2}+m\right)}<1.$$
Then by \eqref{(3.10)} we have $$\Gamma\left(-\frac{1}{2}+m\right) \Gamma\left(\frac{3+n}{2}+m\right)\ge \Gamma\left(m\right) \Gamma\left(\frac{2+n}{2}+m\right)$$ and so $$b_m\le c(m):=\frac{2 m ((n-3) n+2 (n-2) m) \sqrt{\pi } \Gamma\left(\frac{1+n}{2}\right)}{(-2+n) (n+2 m) (-3+n+4 m) \Gamma\left(1+\frac{n}{2}\right)}. $$ Since the last expression increases in $m$ because $$c'(m)=\frac{2 \left((n-3)^2 n^2+4 (n-3) (n-2) n m+4 (6+(n-3) n) m^2\right) \sqrt{\pi } \Gamma\left(\frac{1+n}{2}\right)}{(n-2) (n+2 m)^2 (n+4 m-3)^2 \Gamma\left(1+\frac{n}{2}\right)}\ge 0$$  we have $$b_m\le\lim_{m\to \infty} c(m)= \frac{\sqrt{\pi } \Gamma\left(\frac{1+n}{2}\right)}{2 \Gamma\left(1+\frac{n}{2}\right)}<1.$$ So $$\sup K(x)= K(0)=\frac{2n \pi^{n/2}}{(n+1)\Gamma(n/2)},$$ what we needed to prove.

\end{proof}

\begin{corollary}
\label{main}
Let ${\mathcal D}$ be the mapping defined in \eqref{3.3} and $v=\nabla u={\mathcal D}g,$ $g\in L^{\infty}(B^{n}).$ Then

 $$\|v\|_{\infty}\leq \frac{2n \pi^{n/2}}{(n+1)\Gamma(n/2)}\|g\|_{\infty}.$$
\end{corollary}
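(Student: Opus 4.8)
The plan is to deduce the corollary directly from Theorem \ref{thm} by a pointwise domination argument, with no new analysis required. First I would observe that $v(x)=\mathcal{D}g(x)$ is a vector-valued integral whose integrand is $\frac{1}{\omega_{n-1}}\bigl(\frac{x-y}{|x-y|^{n}}-\frac{|y|^2x-y}{[x,y]^n}\bigr)g(y)$. Applying the triangle inequality for vector-valued integrals, namely $\bigl|\int F\bigr|\le\int|F|$ with $|\cdot|$ the Euclidean norm, yields
$$|v(x)|\le \frac{1}{\omega_{n-1}}\int_{\mathbf{B}}\left|\frac{x-y}{|x-y|^{n}}-\frac{|y|^2x-y}{[x,y]^n}\right|\,|g(y)|\,dy=\mathcal{N}[\,|g|\,](x),$$
since the Euclidean norm of the integrand reproduces exactly the scalar kernel defining $\mathcal{N}$ in \eqref{3.4}.

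Next I would exploit the positivity of the kernel of $\mathcal{N}$. Because $|g(y)|\le\|g\|_{\infty}$ for a.e.\ $y\in\mathbf{B}$ and the kernel is nonnegative, monotonicity of $\mathcal{N}$ gives $\mathcal{N}[\,|g|\,](x)\le\|g\|_{\infty}\,\mathcal{N}[\mathbf{1}](x)$, where $\mathbf{1}$ denotes the constant function equal to $1$. Taking the supremum over $x\in\mathbf{B}$ and recalling that for a positive integral operator the $L^{\infty}\to L^{\infty}$ norm is attained at the constant function, i.e.\ $\sup_{x}\mathcal{N}[\mathbf{1}](x)=\|\mathcal{N}\|_{L^\infty\to L^\infty}$, I would obtain
$$\|v\|_{\infty}=\|\mathcal{D}g\|_{\infty}\le\|g\|_{\infty}\,\|\mathcal{N}\|_{L^\infty\to L^\infty}.$$
Substituting the value of $\|\mathcal{N}\|_{L^\infty\to L^\infty}$ furnished by Theorem \ref{thm} then produces the claimed bound $\frac{2n\pi^{n/2}}{(n+1)\Gamma(n/2)}\|g\|_{\infty}$.

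There is no substantial obstacle in this argument: the corollary is a routine consequence of the theorem, and the only points needing (minor) care are the justification of the vector triangle inequality under the integral sign and the identification of the constant function as the extremal input for the positive operator $\mathcal{N}$. The latter follows from the general principle that $\|T\|_{L^\infty\to L^\infty}=\sup_{x}\int|K(x,y)|\,dy$ for an integral operator with kernel $K$, which is precisely the reduction already used at the start of the proof of Theorem \ref{thm}.
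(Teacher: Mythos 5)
Your proposal is correct and follows essentially the same route as the paper: the paper's proof is just your vector triangle inequality spelled out via duality, writing $|\nabla u(x)|=\sup_{|\xi|=1}\left|\int_{\mathbf{B}}\left<\nabla G(x,y),\xi\right>g(y)\,dy\right|$ and moving the supremum inside the integral, before bounding by $\|\mathcal{N}\|\,\|g\|_{\infty}$ and invoking Theorem \ref{thm}. Your identification of the constant function $\mathbf{1}$ as extremal is the same observation, already built into the paper's opening reduction $\|\mathcal N\|_{L^\infty\to L^\infty}=\sup_{x}\int_{\mathbf{B}}|K(x,y)|\,dy$.
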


\begin{proof}
At the beginning, let us note that
$$\nabla_x G(x,y)=c_{n}(2-n)\left(\frac{x-y}{|x-y|^{n}}-\frac{|y|^2x-y}{[x,y]^n}\right).$$
For $x\in \mathbf{B}$ we have
\begin{equation}
\begin{split}
\|\nabla u(x)\|&=\sup_{|\xi|=1}\left|\left<\int_{\mathbf{B}}\nabla G(x,y)g(y)dy,\xi\right>\right|\\&=\sup_{|\xi|=1}\left|\int_{\mathbf{B}} \left<\nabla G(x,y),\xi\right>g(y)dy\right|\\
&=(n-2)c_{n}\sup_{|\xi|=1}\left|\int_{\mathbf{B}} \left<\frac{x-y}{|x-y|^{n}}-\frac{|y|^2x-y}{[x,y]^n},\xi\right>g(y)dy\right|\\
&\leq (n-2)c_{n}\int_{\mathbf{B}} \sup_{|\xi|=1}\left|\left<\frac{x-y}{|x-y|^{n}}-\frac{|y|^2x-y}{[x,y]^n},\xi\right>\right||g(y)|dy\\
&=  (n-2)c_{n}\int_{\mathbf{B}} \left|\frac{x-y}{|x-y|^{n}}-\frac{|y|^2x-y}{[x,y]^n}\right||g(y)|dy.\\
\end{split}
\end{equation}

So, we obtain the upper estimate for the gradiente of $u,$ i.e.
$$\|\nabla u\|_{\infty}\leq\|\mathcal{N}\|\|g\|_{\infty}.$$

\end{proof}

\section{The $L^1$ norm of the operator $\mathcal N$}
In the sequel let us state a well-known result related to the Riesz potential.

Let $\Omega$ be a domain of $R^{n},$ and let $|\Omega|$ be its volume. For $\mu \in (0, 1]$ define the
operator $V_{\mu}$ on the space $L^{1}(\Omega)$ by the Riesz potential
$$(V_{\mu}f)(x)=\int_{\Omega}|x-y|^{n(\mu-1)}f(y)dy.$$
The operator $V_{\mu}$ is defined for any $f \in L^{1}(\Omega),$ and $V_{\mu}$ is bounded on $L^{1}(\Omega),$ or
more generally we have the next lemma.
\begin{lemma}
\label{lema}
 (\cite{gt}, pp. 156-159]). Let $V_{\mu}$ be defined on the $L^{p}(\Omega)$ with $p > 0.$ Then
$V_{\mu}$ is continuous as a mapping $V_{\mu} : L^{p}(\Omega) \rightarrow L^{q}(\Omega),$ where $1 \leq q \leq\infty,$ and
$$0 \leq \delta = \delta(p, q) =\frac{1}{p}-\frac{1}{q}<\mu.$$

Moreover, for any $f \in L^{p}(\Omega)$
$$\|V_{\mu}f\|_{q} \leq \left(\frac{1}{\mu-\delta}\right)^{1-\delta}\left(\frac{\omega_{n-1}}{n}\right)^{1-\mu}|\Omega|^{\mu-\delta}\|f\|_{p}.$$
\end{lemma}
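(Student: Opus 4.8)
The plan is to recognise $V_\mu$ as the restriction to $\Omega$ of a convolution with the radial kernel $k(z)=|z|^{n(\mu-1)}$ and to run the three--exponent Hölder proof of Young's convolution inequality directly over $\Omega$, estimating the resulting kernel integral by symmetric rearrangement. First I would reduce to $f\ge 0$, since $|V_\mu f|\le V_\mu|f|$, and put $r=\tfrac1{1-\delta}$, so that $1+\tfrac1q=\tfrac1r+\tfrac1p$ and $\tfrac1{p'}+\tfrac1q=\tfrac1r$. The whole statement then reduces to the localised estimate
$$\|V_\mu f\|_{L^q(\Omega)}\le M^{1/r}\,\|f\|_{L^p(\Omega)},\qquad M:=\sup_{z}\int_\Omega|z-w|^{n(\mu-1)r}\,dw,$$
together with a bound for the single number $M$.

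To obtain this estimate I would write the integrand $k(x-y)f(y)$ as the product $\bigl(k^r f^p\bigr)^{1/q}\,k^{\,1-r/q}\,f^{\,1-p/q}$ and apply Hölder's inequality with the three exponents $q$, $p'$ and $r'$, whose reciprocals sum to one. The algebraic identities $\bigl(1-\tfrac rq\bigr)p'=r$ and $\bigl(1-\tfrac pq\bigr)r'=p$ make the three factors integrate to $\bigl(\int_\Omega k^r f^p\,dy\bigr)^{1/q}$, $M^{1/p'}$ and $\|f\|_p^{p/r'}$. Raising the pointwise bound to the power $q$, integrating in $x$ over $\Omega$ and using Fubini on $\int_\Omega k^r f^p\,dy$ — where the symmetry $k(x-y)=k(y-x)$ makes the $x$--integral of $k^r$ equal to the same constant $M$ — collapses everything to $\|V_\mu f\|_q\le M^{1/r}\|f\|_p$, with no residual fractional power left to control. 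The degenerate cases $p=1$ (where $p'=\infty$ and the middle factor is constant, so the argument becomes Minkowski's integral inequality) and $q=\infty$ are covered by the same scheme.

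The decisive step is the estimate for $M$. Since $w\mapsto|z-w|^{n(\mu-1)r}$ is radially decreasing, the bathtub (Hardy--Littlewood) principle gives, uniformly in the centre $z$,
$$\int_\Omega|z-w|^{n(\mu-1)r}\,dw\le\int_{B(z,R)}|z-w|^{n(\mu-1)r}\,dw=\frac{\omega_{n-1}}{n-\lambda}\,R^{\,n-\lambda},$$
where $\lambda=n(1-\mu)r$ and $R$ is fixed by $|B(z,R)|=|\Omega|$, i.e. $R=\bigl(n|\Omega|/\omega_{n-1}\bigr)^{1/n}$. This is where the hypothesis is used in an essential way: the radial integral converges precisely when $\lambda<n$, and since $r=\tfrac1{1-\delta}$ this is equivalent to $(1-\mu)/(1-\delta)<1$, that is to $\delta<\mu$. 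Substituting $R$, using $n-\lambda=n(\mu-\delta)/(1-\delta)$, collecting the powers of $\omega_{n-1}$, $n$ and $|\Omega|$, and discarding the harmless factor $(1-\delta)^{1-\delta}\le1$, I obtain $M^{1/r}\le\bigl(\tfrac1{\mu-\delta}\bigr)^{1-\delta}\bigl(\tfrac{\omega_{n-1}}{n}\bigr)^{1-\mu}|\Omega|^{\mu-\delta}$, which is the asserted constant.

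The point that I expect to demand the most care is the localisation. The kernel $|z|^{n(\mu-1)}$ is never globally in $L^r(\R^n)$ under our hypothesis — it is locally integrable at the origin exactly when $\delta<\mu$ but then fails to be integrable at infinity — so one cannot simply quote Young's inequality on $\R^n$; the whole argument must be run with all integrals taken over $\Omega$ and made uniform in the base point, which is precisely what the direct three--exponent Hölder estimate together with the equimeasurable--ball rearrangement achieves. Verifying that $\delta<\mu$ is exactly the threshold of finiteness for $M$, and matching the elementary radial computation to the stated constant, is then the remaining bookkeeping.
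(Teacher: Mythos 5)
Your proof is correct, and it reconstructs exactly the argument of the source the paper cites for this lemma (Gilbarg--Trudinger, Lemma 7.12): the paper itself gives no proof, quoting the result, and the standard proof is precisely your three-exponent H\"older estimate with $r=1/(1-\delta)$ combined with the equimeasurable-ball rearrangement bound for $M$, with $\delta<\mu$ entering as the finiteness threshold $\lambda=n(1-\mu)r<n$. Your bookkeeping even recovers the sharper constant $\left(\frac{1-\delta}{\mu-\delta}\right)^{1-\delta}\left(\frac{\omega_{n-1}}{n}\right)^{1-\mu}|\Omega|^{\mu-\delta}$ of Gilbarg--Trudinger before discarding $(1-\delta)^{1-\delta}\le 1$ to match the stated form.
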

\begin{theorem}\label{l11}
The norm of the operator $\mathcal{N}:L^1\to L^1$ is $\frac{1}{n-2}$.
\end{theorem}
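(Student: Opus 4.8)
The plan is to use the sharp, elementary description of the norm of a positive integral operator. Since the kernel
\[
N(x,y)=\frac{1}{\omega_{n-1}}\left|\frac{x-y}{|x-y|^{n}}-\frac{|y|^2x-y}{[x,y]^n}\right|=|\nabla_x G(x,y)|
\]
is nonnegative, the standard argument (Fubini's theorem together with saturation of the triangle inequality) gives
\[
\|\mathcal N\|_{L^1\to L^1}=\esssup_{y\in\mathbf B}L(y),\qquad L(y)=\int_{\mathbf B}N(x,y)\,dx .
\]
Thus the whole problem reduces to maximising, over the pole $y$, the total mass of $|\nabla_x G(\cdot,y)|$ on the ball. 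Note that here one integrates in the \emph{first} variable and takes the supremum in the \emph{second}, the exact opposite of the $L^\infty$ computation in Theorem~\ref{thm}; since the kernel is not symmetric in $x,y$ (the factor $|y|^2$ multiplies $x$ only), the two norms are genuinely different quantities.

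The first step I would carry out is the value at the centre. For $y=0$ we have $[x,0]=1$, the reflection term drops out identically, and $N(x,0)=\frac1{\omega_{n-1}}|x|^{1-n}$, so $L(0)=\frac1{\omega_{n-1}}\int_{\mathbf B}|x|^{1-n}\,dx$ is a one-line radial integral. Because the Green function of the ball is most symmetric when its pole sits at the centre, $y=0$ is the natural candidate for the maximiser, and this computation furnishes the lower bound for the norm.

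For a general pole I would imitate the mechanism of Theorem~\ref{thm}, but substitute in the $x$–integral: set $x=T_{-y}(w)$, i.e. $w=T_{y}(x)$. Using $|x-y|=|w|\frac{1-|y|^2}{[w,-y]}$, the companion identity $[x,y]=\frac{1-|y|^2}{[w,-y]}$ and the Jacobian \eqref{deri}, the integrand transforms and
\[
L(y)=\frac{1-|y|^2}{\omega_{n-1}}\int_{\mathbf B}\frac{\bigl|E(w,y)\bigr|}{[w,-y]^{n+2}}\,dw
\]
for an explicit vector $E=E(w,y)$. Writing $w=r\xi$ with $\xi\in\mathbf S$ and using $[w,-y]=|ry+\xi|$, I would reduce the angular part by means of formula \eqref{formula}, expand the outcome as a power series in $|y|^2$, and — exactly as in the proof of Theorem~\ref{thm} — fix the sign of the successive coefficients with the Gamma–function inequality \eqref{(3.10)}. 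The aim is to show $L(y)\le L(0)$, so that the supremum is attained at the centre; evaluating $L(0)$ then yields the norm. As a complementary bound, the Riesz–potential estimate of Lemma~\ref{lema}, applied with $\mu=1/n$ so as to match the $|x-y|^{1-n}$ singularity, controls $L(y)$ from above and isolates the singular contribution.

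The step I expect to be the main obstacle is precisely the reflection term. In the $L^\infty$ case, fixing $x$ made $|y|^2x-y$ linearise after the Möbius substitution, and the modulus collapsed to a clean power of $|z|$; here, because the two points enter the kernel asymmetrically, after fixing $y$ the transformed field $E(w,y)$ retains an angular cross term in $\xi\cdot y$ and $|E|$ does \emph{not} reduce to a pure power of $|w|$. Estimating this square root inside the angular integral, and then reconciling the resulting supremum with the asserted constant $\frac{1}{n-2}$ — in particular against the explicit centre value $L(0)$ — is the delicate point; everything else is the bookkeeping already rehearsed in Theorem~\ref{thm}.
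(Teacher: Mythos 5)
Your opening reduction is exactly the paper's first step: the paper also passes to the adjoint, writing $\|\mathcal N\|_{L^{1}\to L^{1}}=\|\mathcal N^{\ast}\|_{L^{\infty}\to L^{\infty}}$ with the swapped kernel $|H(x,y)|$, which is your identity $\|\mathcal N\|_{L^1\to L^1}=\esssup_y\int_{\mathbf B}N(x,y)\,dx$; and $y=0$ is indeed the maximizer. The genuine gap is that the heart of the argument --- proving $L(y)\le L(0)$ --- is left open, and the obstacle you yourself flag is fatal to the route you propose: substituting $x=T_{-y}(w)$ directly into $|H|$ leaves a modulus with an angular cross term that does not collapse to a power of $|w|$, so the ``bookkeeping of Theorem~\ref{thm}'' never gets started. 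The missing idea, which is the paper's Lemma~\ref{needh}, is a pointwise triangle-inequality splitting performed \emph{before} any substitution: writing $|y|^{2}x-y=(x-y)-(1-|y|^{2})x$ gives
\[
\Bigl|\frac{x-y}{|x-y|^{n}}-\frac{|y|^{2}x-y}{[x,y]^{n}}\Bigr|
\le |x-y|\Bigl(\frac{1}{|x-y|^{n}}-\frac{1}{[x,y]^{n}}\Bigr)
+\frac{|x|\bigl(1-|y|^{2}\bigr)}{[x,y]^{n}},
\]
(in the paper's adjoint notation, $|H|\le K+L$), with \emph{equality at the candidate maximizer} $y=0$. The first summand depends on the two points only through $|x-y|$ and $[x,y]$, both of which transform multiplicatively under the M\"obius map, so for it the substitution \emph{does} collapse the integrand to $(1-|z|^{n})/|z|^{n-1}$ against a power of the bracket; the angular integral is then \eqref{formula}, and the hypergeometric series has coefficients $e_{m}\ge 0$, handled with the Gamma-inequality \eqref{(3.10)} just as you anticipated. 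The second summand needs no substitution at all: it integrates to $C_n'\,(1-|y|^{2})\,F\bigl(1,\tfrac{1+n}{2},\tfrac{3+n}{2},|y|^{2}\bigr)$, visibly maximal at the centre. Because the splitting is an identity at the centre, the sum of the two maxima, $\frac{n}{n+1}\omega_{n-1}+\frac{1}{n+1}\omega_{n-1}=\omega_{n-1}$, equals the centre value of the original integral, so sharpness survives the triangle inequality. Without this decomposition (or a substitute for it) your outline cannot be completed.

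Two further points. Your fallback via Lemma~\ref{lema} with $\mu=1/n$ only yields a non-sharp upper bound for the Riesz-type singular part and can never produce the exact constant, so it does not repair the gap. Also, if you actually carry out your ``one-line radial integral'' with the normalization of \eqref{3.4}, you get $L(0)=\frac{1}{\omega_{n-1}}\int_{\mathbf B}|x|^{1-n}\,dx=1$, not $\frac{1}{n-2}$: the stated value corresponds to the kernel normalized by $\frac{1}{(n-2)\omega_{n-1}}$, i.e.\ $c_n$ times the modulus, as in the operator $\mathcal H$ of Lemma~\ref{needh}. The ``reconciliation'' you worried about is thus a real normalization discrepancy inside the paper, which your centre computation would have exposed but which no amount of estimation of $L(y)$ can resolve.
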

\begin{corollary}\label{l1}
Let $g\in L^1(\mathbf{B})$ and $v=\nabla u=\mathcal D[g]$. Then $$\|v\|_{1}\leq \frac{1}{n-2}\|g\|_{1}.$$
\end{corollary}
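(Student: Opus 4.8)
The plan is to exploit the duality between the two one-sided norms. Just as the identity $\|\mathcal N\|_{L^\infty\to L^\infty}=\sup_{x}\int_{\mathbf B}N(x,y)\,dy$ drove the proof of Theorem \ref{thm}, for the nonnegative kernel $N(x,y)=\frac1{\omega_{n-1}}\bigl|\frac{x-y}{|x-y|^n}-\frac{|y|^2x-y}{[x,y]^n}\bigr|=|\nabla_xG(x,y)|$ one has $\|\mathcal N\|_{L^1\to L^1}=\esssup_{y\in\mathbf B}\Phi(y)$, where $\Phi(y):=\int_{\mathbf B}N(x,y)\,dx$. Thus the whole problem reduces to evaluating $\sup_y\Phi(y)$, i.e. to integrating $|\nabla_xG(\cdot,y)|$ over the ball for a fixed pole $y$. (Boundedness of $\mathcal N$ on $L^1$ is not in doubt: the only non-smooth part of the kernel is the diagonal singularity $\frac1{\omega_{n-1}}|x-y|^{-(n-1)}$, the image term $\frac{|y|^2x-y}{[x,y]^n}$ being bounded for interior $y$, and this is exactly the Riesz kernel $V_\mu$ with $n(\mu-1)=-(n-1)$, i.e. $\mu=1/n$, to which Lemma \ref{lema} applies.)

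To compute $\Phi$ I would mirror the calculation in Theorem \ref{thm}, but now substitute $x=T_{-y}w$ with $y$ fixed (rather than $y=T_{-x}z$ with $x$ fixed). Using the Möbius identities $|T_yx|=|w|$, $[x,y]=\frac{1-|y|^2}{[w,-y]}$ and $|x-y|=|w|\,[x,y]$, together with the conformal Jacobian $dx=\bigl(\frac{1-|y|^2}{[w,-y]^2}\bigr)^{n}dw$ from \eqref{deri}, the expression $|\nabla_xG(x,y)|$ turns into an explicit rational function of $w$ and $y$. Passing to polar coordinates $w=r\xi$ and performing the angular integral over $\mathbf S$ by means of the identity \eqref{formula} should then exhibit $\Phi(y)$ as a power (hypergeometric) series $\Phi(y)=\sum_{m\ge0}\alpha_m|y|^{2m}$ with coefficients $\alpha_m$ that are quotients of Gamma functions.

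The final step is to locate and evaluate $\sup_y\Phi(y)$. Following the template of Theorem \ref{thm}, I would determine the signs of the $\alpha_m$ (equivalently, the monotonicity of $\Phi$ in $|y|$) using the Gamma-function inequality \eqref{(3.10)}; this should force the extremum to a single value of $|y|$ at which $\Phi$ can be summed in closed form, yielding the stated constant. Lemma \ref{lema}, applied with $\mu=1/n$, serves as an independent a priori control on the singular part and as a consistency check on the upper bound.

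I expect the main obstacle to be the cancellation between the local term $\frac{x-y}{|x-y|^n}$ and the image term $\frac{|y|^2x-y}{[x,y]^n}$. Since $\Phi(y)$ integrates the \emph{modulus} of their difference, the two pieces cannot be estimated separately without losing the sharp constant, so the cancellation must be carried exactly through the Möbius reduction. A closely related subtlety is deciding \emph{where} the supremum is attained and what its value is: the value at the centre is governed by the interior integral $\frac1{\omega_{n-1}}\int_{\mathbf B}|x|^{-(n-1)}\,dx$, while the behaviour as $|y|\to1$ is governed, through the Kelvin transform, by the exterior integral $\frac1{\omega_{n-1}}\int_{\mathbf R^n\setminus\mathbf B}|x|^{-(n-1)}\,dx=\frac1{n-2}$. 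Reconciling these two candidate values and proving rigorously that the extremal value is $\frac1{n-2}$ is the crux of the argument.
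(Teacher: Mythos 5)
Your duality reduction is exactly the paper's first step: the paper passes to the adjoint, writing $\|\mathcal N\|_{L^{1}\to L^{1}}=\|\mathcal N^{*}\|_{L^{\infty}\to L^{\infty}}$, where the adjoint kernel is $\frac{1}{\omega_{n-1}}|H(x,y)|$ with $H(x,y)=\frac{y-x}{|y-x|^{n}}-\frac{|x|^{2}y-x}{[y,x]^{n}}$, and then shows in Lemma~\ref{needh} that $\sup_{x}\int_{\mathbf B}|H(x,y)|\,dy$ is attained at $x=0$; your planned M\"obius substitution is likewise the engine of that lemma. But your write-up stops at a plan: every decisive step is conditional (``should exhibit'', ``I would determine''). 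In particular you offer no mechanism for integrating the \emph{modulus} of the difference kernel in closed form — the very cancellation you flag as the main obstacle. The paper handles it via the pointwise splitting $|H(x,y)|\le K(x,y)+L(x,y)$ with $K(x,y)=|x-y|\bigl(|x-y|^{-n}-[x,y]^{-n}\bigr)$ and $L(x,y)=|y|(1-|x|^{2})[x,y]^{-n}$, an inequality that is an \emph{equality} at $x=0$ (both sides equal $|y|^{1-n}$ there), so each piece can be evaluated hypergeometrically via \eqref{formula} and the losses vanish simultaneously at the extremal point. Without some such device, your program does not get off the ground. You would also need to state the pointwise bound $|\mathcal D[g]|\le\mathcal N[|g|]$ to transfer the norm of $\mathcal N$ to $\mathcal D$, which you use only tacitly.

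More seriously, your closing paragraph shows the plan cannot reach the constant you are aiming at, and its numerics are wrong. First, $\frac{1}{\omega_{n-1}}\int_{\mathbf R^{n}\setminus\mathbf B}|x|^{-(n-1)}\,dx=\int_{1}^{\infty}dr$ diverges; it is not $\frac{1}{n-2}$, so the Kelvin-transform ``boundary candidate'' is illusory. Second, your own centre value is $\Phi(0)=\frac{1}{\omega_{n-1}}\int_{\mathbf B}|x|^{1-n}\,dx=1$, which exceeds $\frac{1}{n-2}$ for every $n\ge 4$; since point masses are approximable in $L^{1}$, the best constant is at least $\sup_{y}\Phi(y)\ge 1$, so no sign analysis of series coefficients can yield $\sup_{y}\Phi(y)=\frac{1}{n-2}$. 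Had you pushed this observation, you would have uncovered a normalization tension in the paper itself: Lemma~\ref{needh} actually proves $\sup_{x}\int_{\mathbf B}|H(x,y)|\,dy=\omega_{n-1}$, i.e.\ the adjoint of $\mathcal N$ as normalized in \eqref{3.4} has $L^{\infty}\to L^{\infty}$ norm $1$; the factor $\frac{1}{n-2}$ in \eqref{polu} enters only through the prefactor $\frac{1}{(n-2)\omega_{n-1}}$ built into the definition of $\mathcal H$, which does not match \eqref{3.4} (the statements coincide exactly when $n=3$). So the executable version of your plan proves $\|v\|_{1}\le\|g\|_{1}$, not the printed inequality with $\frac{1}{n-2}$; as written, your proposal neither completes the computation nor detects this discrepancy — indeed it sets out to prove a value that its own lower bound $\Phi(0)=1$ contradicts.
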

In order to prove  Theorem~\ref{l11} we need the following lemma

\begin{lemma}\label{needh}
Let $H(x,y)=\frac{y-x}{|y-x|^n}-\frac{|x|^2 y-x}{[y,x]^n}$, and let $$\mathcal{H}[g]=\frac{1}{(n-2)\omega_{n-1}}\int_{\mathbf{B}} |H(x,y)| g(y) dy.$$ Then
\begin{equation}\label{polu} \|\mathcal{H}\|_{L^\infty\to L^\infty}=\frac{1}{(n-2)\omega_{n-1}}\int_{\mathbf{B}} |H(0,y)| dy=\frac{1}{n-2}.\end{equation}
\end{lemma}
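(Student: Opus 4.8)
The plan is to follow the architecture of the proof of Theorem~\ref{thm}. Since $|H(x,y)|\ge 0$, the operator $\mathcal H$ has a nonnegative kernel, so
$$\|\mathcal H\|_{L^\infty\to L^\infty}=\frac{1}{(n-2)\omega_{n-1}}\sup_{x\in\mathbf B}K^*(x),\qquad K^*(x):=\int_{\mathbf B}|H(x,y)|\,dy,$$
and the lower bound $\|\mathcal H\|\ge\tfrac1{n-2}$ is immediate, since for the positive kernel one has $\|\mathcal H\|=\|\mathcal H[1]\|_\infty\ge\tfrac1{(n-2)\omega_{n-1}}K^*(0)$. At $x=0$ one has $[y,0]=1$ and $H(0,y)=y/|y|^n$, hence $|H(0,y)|=|y|^{1-n}$ and
$$K^*(0)=\int_{\mathbf B}|y|^{1-n}\,dy=\omega_{n-1}\int_0^1 dr=\omega_{n-1},$$
which already yields the stated value $\tfrac{1}{(n-2)\omega_{n-1}}K^*(0)=\tfrac1{n-2}$. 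Thus everything reduces to proving $K^*(x)\le K^*(0)=\omega_{n-1}$ for every $x\in\mathbf B$, i.e.\ that the supremum is attained at the centre.

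First I would bring $K^*(x)$ to an explicit form by the same M\"obius substitution $y=T_{-x}z$ used in Theorem~\ref{thm}, together with the relations $[x,y]=\frac{1-|x|^2}{[z,-x]}$, $|y-x|=|z|\frac{1-|x|^2}{[z,-x]}$ and $dy=\bigl(\frac{1-|x|^2}{[z,-x]^2}\bigr)^n dz$. The one new computation is the image of the second numerator, namely
$$|x|^2y-x=\frac{(1-|x|^2)\bigl(|x|^2z-2(x\cdot z)x-x\bigr)}{[z,-x]^2}.$$
Writing $z=r\xi$ with $r=|z|$, $\xi\in\mathbf S$ and using $[z,-x]=|rx+\xi|$, this gives
$$K^*(x)=(1-|x|^2)\int_0^1\!\!\int_{\mathbf S}\frac{|Ax+B\xi|}{|rx+\xi|^{n+2}}\,d\xi\,dr,\quad A=r+r^{n-1}+2r^n(x\cdot\xi),\ B=1-|x|^2r^n,$$
together with the key algebraic identity
$$Ax+B\xi=(1-|x|^2r^n)(rx+\xi)+r^{n-1}|rx+\xi|^2\,x .$$
Setting $x=0$ recovers $K^*(0)=\omega_{n-1}$, confirming the normalisation.

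The angular integrals of the pure powers $|rx+\xi|^{-a}$ are computed by \eqref{formula}, giving $\int_{\mathbf S}|rx+\xi|^{-a}d\xi=\frac{2\pi^{n/2}}{\Gamma(n/2)}F\bigl(\frac a2,1-\frac n2+\frac a2,\frac n2,r^2|x|^2\bigr)$, exactly as in Theorem~\ref{thm}. The hard part will be that the numerator $|Ax+B\xi|$ is a genuine Euclidean norm, i.e.\ a square root of a quadratic in $x\cdot\xi$; indeed one finds
$$|Ax+B\xi|^2=\bigl(1+r^{2n-2}|x|^2+2r^{n-1}(x\cdot\xi)\bigr)|rx+\xi|^2+r^2|x|^2\bigl(1+r^{2n-2}|x|^2\bigr).$$
In Theorem~\ref{thm} the corresponding extra term was parallel to $\xi$ and split off losslessly by the triangle inequality at the extremal configuration; here the extra term $r^{n-1}|rx+\xi|^2x$ is parallel to $x$, so a naive triangle inequality, or even a tangent-line linearisation of the square root, overestimates and introduces a spurious term of first, resp.\ second, order in $|x|$. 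This is fatal, because a direct expansion gives $K^*(x)=\omega_{n-1}\bigl(1-\frac{n}{2n-1}|x|^2+O(|x|^4)\bigr)$, so the angular cancellation responsible for the genuine decay must be preserved and cannot be discarded by any crude majorisation.

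I would therefore keep the square root and expand $K^*$ directly as a power series in $|x|^2$. By rotational symmetry $K^*(x)=\omega_{n-1}\sum_{m\ge0}d_m|x|^{2m}$ with $d_0=1$, and the coefficients $d_m$ are obtained from the binomial expansion of the square root together with the sphere moments $\int_{\mathbf S}(\widehat x\cdot\xi)^{2k}\,d\xi=\omega_{n-1}\,\mathrm B\!\bigl(k+\tfrac12,\tfrac{n-1}2\bigr)\big/\mathrm B\!\bigl(\tfrac12,\tfrac{n-1}2\bigr)$ and the elementary integrals $\int_0^1 r^{\bullet}\,dr$. It then remains to show that $d_m\le 0$ for all $m\ge1$, which I expect to reduce, after cancelling factorials, to proving that a ratio of Gamma functions is monotone in $m$ and bounded above by $1$; this is precisely the type of estimate furnished by the Gamma-function inequality \eqref{(3.10)} and used to control the coefficients $a_m$ in Theorem~\ref{thm}. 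Establishing the sign of $d_m$ uniformly in $m$ is the main obstacle, the square root being the source of the delicate, non-termwise cancellations.
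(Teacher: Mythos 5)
Your proposal stalls exactly at the point that constitutes the entire content of the lemma. The reduction to $K^*(x)\le K^*(0)=\omega_{n-1}$, the value at $x=0$, and the M\"obius substitution (including the formula for $|x|^2y-x$ and the identity $Ax+B\xi=(1-|x|^2r^n)(rx+\xi)+r^{n-1}|rx+\xi|^2x$, which I checked) are all correct, but the final step --- ``show $d_m\le0$ for all $m\ge1$'' --- is only announced, and you yourself call it the main obstacle. No closed form for $d_m$ is produced and no mechanism is given by which \eqref{(3.10)} would control it, so there is no proof. Moreover, your squared-norm identity is wrong: expanding $\bigl|(1-|x|^2r^n)(rx+\xi)+r^{n-1}|rx+\xi|^2x\bigr|^2$ and using $x\cdot(rx+\xi)=r|x|^2+x\cdot\xi$, the cross terms collapse and one gets the clean factorization
\begin{equation*}
|Ax+B\xi|^2=|rx+\xi|^2\bigl(1+r^{2n-2}|x|^2+2r^{n-1}(x\cdot\xi)\bigr)=|rx+\xi|^2\,|r^{n-1}x+\xi|^2,
\end{equation*}
with no additive remainder $r^2|x|^2(1+r^{2n-2}|x|^2)$. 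So the integrand is $(1-|x|^2)\,|r^{n-1}x+\xi|\,|rx+\xi|^{-(n+1)}$; this is still not a pure power of $|rx+\xi|$, so \eqref{formula} does not apply directly and your binomial expansion would have to mix two different angular kernels --- which is precisely why the unproved sign condition on $d_m$ is not a routine Gamma-ratio estimate.

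Your impossibility claim --- that ``the angular cancellation \dots cannot be discarded by any crude majorisation'' --- is also refuted by the paper's own proof, and this is where your route diverges from a workable one. The paper splits \emph{before} the substitution, writing $H(x,y)=(y-x)\bigl(\frac{1}{|y-x|^n}-\frac{1}{[x,y]^n}\bigr)+\frac{(1-|x|^2)\,y}{[x,y]^n}$ and applying the triangle inequality, $|H|\le K+L$. The crucial features are: (i) the split is an \emph{equality} at $x=0$, since $|H(0,y)|=|y|^{1-n}=\bigl(|y|^{1-n}-|y|\bigr)+|y|$; and (ii) each piece transforms into pure-power sphere integrals evaluable by \eqref{formula}, yielding series of the form $c_0-\sum_{m\ge1}(\text{nonnegative})\,|x|^{2m}$, so each piece is separately maximized at $x=0$, and the two maxima add up to exactly $\omega_{n-1}$. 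Your argument only rules out majorisations whose loss fails to vanish at the maximizer (such as your post-substitution split with error parallel to $x$, which indeed produces a spurious $O(|x|)$ term); it does not preclude a split that is lossless at $x=0$ --- one never needs the sharp coefficient of $|x|^2$, only the location of the maximum. Adopting the paper's decomposition would replace your intractable $d_m$ with two explicit hypergeometric series whose coefficient signs are checked termwise via \eqref{(3.10)}, closing the gap.
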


\begin{proof}[Proof of Lemma~\ref{needh}]

We need to find $$\sup_x \int_{\mathbf{B}} |H(x,y)|  dy.$$ We will show that its supremum is achieved for $x=0$.
We first have $$|H(x,y)|\le  K(x,y)+L(x,y)=|x-y|\left(\frac{1}{|x-y|^n}-\frac{1}{[x,y]^n}\right)+\frac{|y|(1-|x|^2)}{[x,y]^n}.$$

Further  we have
\begin{equation}
\sup_{x\in B^{n}}\int_{B^{n}}\left|K(x,y)\right|dy=\sup_{x\in B^{n}}\int_{B^{n}}\frac{1}{|x-y|^{n-1}}\left|1-\left|\frac{x-y}{[x,y]}\right|^{n}\right|dy.
\end{equation}
We use the change of variables $z=T_{x}y,\enspace\mbox{i.e.}\enspace T_{-x}z=y,$ where $T_{x}y$ is the M\"{o}bius transform
$$T_{x}y=\frac{(1-|x|^2)(y-x)-|y-x|^{2}x}{[x,y]^2},\enspace |T_{x}y|=\left|\frac{x-y}{[x,y]}\right|.$$ We obtain
$$dy=\left(\frac{1-|x|^2}{[z,-x]^2}\right)^{n}dz.$$
Assume without loss of generality that $x=|x|e_1$. Further for $\xi=(\xi_1,...,\xi_n),$
\begin{equation}
\begin{split}
&\sup_{x\in \mathbf{B}}\int_{\mathbf{B}}\left|H(x,y)\right|dy=\sup_{x\in \mathbf{B}}\int_{\mathbf{B}}\frac{1}{|x-T_{-x}z|^{n-1}}|1-|z|^{n}|\frac{(1-|x|^2)^n}{[z,-x]^{2n}}dz\\
&=\sup_{x\in \mathbf{B}}(1-|x|^2)^{n}\int_{\mathbf{B}}\frac{(1-|z|^{n-2})}{\left|\frac{x[z,-x]^{2}-(1-|x|^2)(x+z)-|x+z|^{2}x}{[z,-x]^2}\right|^{n-1}}\frac{dz}{[z,-x]^{2n}}\\
&=\sup_{x\in \mathbf{B}}(1-|x|^2)^{n}\int_{\mathbf{B}}\frac{(1-|z|^n)}{|z|^{n-1}\left|\frac{1-|x|^2}{[z,-x]}\right|^{n-1}}\frac{dz}{[z,-x]^{2n}}\\
&=\sup_{x\in \mathbf{B}}(1-|x|^2)^{n-(n-1)}\int_{\mathbf{B}}\left(\frac{1-|z|^n}{|z|^{n-1}}\right)[z,-x]^{(n-1)-2n}dz\\
&=\sup_{x\in \mathbf{B}}(1-|x|^2)\int_{0}^{1}(1-r^{n})r^{n-(n-1)-1}dr\int_{\mathbf{S}}\frac{d\xi}{|rx+\xi|^{2n-(n-1)}}\\
&=\sup_{x\in \mathbf{B}}(1-|x|^2)\int_{0}^{1}(1-r^n)dr \int_{\mathbf{S}} \frac{d\xi}{(r^{2}|x|^{2}+2r|x|\xi_{1}+1)^{\frac{n+1}{2}}}\\
&=\frac{2 \pi ^{\frac{1}{2} (-1+n)}}{\Gamma\left[\frac{1}{2} (-1+n)\right]}\sup_{x\in \mathbf{B}}(1-|x|^2)\int_{0}^{1}(1-r^{n})dr
\int_{0}^{\pi}\frac{\sin^{n-2} t}{(r^{2}|x|^{2}+2r|x|\cos t+1)^{\frac{n+1}{2}}}dt,
\\&=\frac{2 \pi ^{n/2}}{\Gamma\left[\frac{n}{2}\right]}  \sup_{x\in \mathbf{B}}(1-|x|^2)\int_{0}^{1}(1-r^{n}) F\left(\frac{3}{2},\frac{1+n}{2},\frac{n}{2},r^2 |x|^2\right)dr\\&=\frac{2 \pi ^{n/2}}{\Gamma\left[\frac{n}{2}\right]}  \sup_{x\in \mathbf{B}} J(x)= \omega_{n-1} \sup_{x\in \mathbf{B}} J(x)
\end{split}
\end{equation}
where \[\begin{split}&J(x)=\frac{n}{1 + n}-\sum_{m=1}^\infty e_m |x|^{2m}\le J(0),\end{split}\] with $$e_m=\frac{n \left(8 m^2+(n-1)^2+2 m (3n-5)\right) \Gamma\left(m-\frac{1}{2}\right) \Gamma\left(\frac{3}{2}+m+\frac{n}{2}\right) \Gamma\left(\frac{n}{2}\right)}{(2 m+n-1)^2 (2 m+n+1)^2 \sqrt{\pi } \Gamma[1+m] \Gamma\left(m+\frac{n}{2}\right) \Gamma\left(\frac{1+n}{2}\right)}.$$
In the first appearance of hypergemetric function we used \eqref{formula}.

On the other hand similarly we prove that \[\begin{split}L(x)&=\int_{\mathbf{B}}L(x,y)dy\\&=C_n'\left(1-|x|^2\right) {F}\left(1,\frac{1+n}{2},\frac{3+n}{2},|x|^2\right)\\&=C_n' (1-\sum_{m=1}^\infty \frac{2 (1+n)}{-1+4 m^2+4 m n+n^2}|x|^{2m})\\&\le L(0),\end{split}\] where $$C_n'=L(0)=\int_{\mathbf{B}}|y| dy= \frac{n}{n+1}\frac{\pi^{n/2}}{\Gamma[1+n/2]}=\frac{\omega_{n-1}}{n+1}.$$
Hence \[\begin{split}\sup_x \int_{\mathbf{B}} |H(x,y)| dy&\le \int_{\mathbf{B}} |H(0,y)| dy=\omega_{n-1}J(0)+L(0)\\&=\left(\frac{n}{n+1}+\frac{1}{n+1}\right)\omega_{n-1}=\omega_{n-1}.\end{split}\] This implies \eqref{polu}.
\end{proof}
\begin{proof}[Proof of Theorem~\ref{l1} and Corollary~\ref{l11}]
Since
$$\|\mathcal N\|_{L^{1}\rightarrow L^{1}}=\|\mathcal N^{\ast}\|_{L^{\infty}\rightarrow L^{\infty}},$$
where $\mathcal N^{\ast}$ is appropriate adjoint  operator and $$\mathcal N^{\ast}f(x)=\int_{\mathbf{B}}\overline{\mathcal N(y,x)}f(y)dy=\int_{\mathbf{B}}|H(x,y)|f(y)dy,\quad f\in L^{\infty}(B),$$ we have $$\|\mathcal N^{\ast}\|_{L^{1}\rightarrow L^{1}}=\|\mathcal H\|_{L^{\infty}\rightarrow L^{\infty}}.$$ So Theorem~\ref{l1} follows from Lemma~\ref{needh}.

On the other hand,  Corollary~\ref{l11} follows from the following inequality
$$ \|\mathcal D[g]\|_{L^{1}\rightarrow L^{1}}\leq \|\mathcal N\|_{L^{1}\rightarrow L^{1}}.$$
\end{proof}
At this point let us point out  the fact that $$\mathcal D:L^{p}(\mathbf{B},\mathbf{R})\rightarrow L^{p}(\mathbf{B},\mathbf{R}^{n}),$$ where
$L^{p}(\mathbf{B},\mathbf{R}^{n}) $ is the appropriate Lebesgue space of vector-functions. By $\|\mathcal D\|_{p}$ we denote the norm of the operator $\mathcal D.$

By using the Ries-Thorin interpolation theorem we obtain the next estimates of the norm for the operators $\mathcal N$ and $\mathcal{D}$
\begin{corollary}
Let us denote by $\|\mathcal N\|_i:=\|{\mathcal N}\|_{L^{i}\rightarrow L^{i}}$, $i\in\{1,\infty\}$. Then
$$\|\mathcal D\|_p<\|\mathcal{N}\|_p\leq \|{\mathcal N}\|_{1}^{\frac{1}{p}}\|{\mathcal N}\|_{\infty}^{\frac{p-1}{p}},1<p<\infty.$$
\end{corollary}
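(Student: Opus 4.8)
The plan is to establish the two inequalities in the displayed chain by completely separate arguments. The right-hand bound $\|\mathcal N\|_p\le \|\mathcal N\|_1^{1/p}\|\mathcal N\|_\infty^{(p-1)/p}$ is a direct application of the Riesz--Thorin interpolation theorem to the single operator $\mathcal N$, whose two endpoint norms have already been computed. The left-hand bound $\|\mathcal D\|_p<\|\mathcal N\|_p$ will instead come from the pointwise domination of the vector-valued kernel of $\mathcal D$ by the nonnegative scalar kernel of $\mathcal N$, the strict inequality being the only point that needs real care.

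First I would set up the interpolation. The operator $\mathcal N$ is linear with the fixed nonnegative kernel $\frac{1}{\omega_{n-1}}\big|\frac{x-y}{|x-y|^n}-\frac{|y|^2x-y}{[x,y]^n}\big|$, and Theorem~\ref{thm} together with Theorem~\ref{l11} supply
$$\|\mathcal N\|_\infty=\frac{2n\pi^{n/2}}{(n+1)\Gamma(n/2)},\qquad \|\mathcal N\|_1=\frac{1}{n-2}.$$
Interpolating between the endpoints $p=1$ and $p=\infty$, the exponent $p\in(1,\infty)$ corresponds to the parameter $\theta$ with $\frac1p=(1-\theta)\cdot 1+\theta\cdot 0$, so that $\theta=\frac{p-1}{p}$ and $1-\theta=\frac1p$. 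Applying Riesz--Thorin (after the usual complexification, the kernel being real and positive, so that the real and complex endpoint norms agree) then gives $\mathcal N\colon L^p\to L^p$ boundedly with $\|\mathcal N\|_p\le \|\mathcal N\|_1^{1-\theta}\|\mathcal N\|_\infty^{\theta}$, which is exactly the stated right-hand inequality.

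Next I would address $\|\mathcal D\|_p\le\|\mathcal N\|_p$. Fixing $g\in L^p(\mathbf{B},\mathbf{R})$ and applying the triangle inequality to the Euclidean-norm-valued integral defining $\mathcal D$, one obtains for every $x\in\mathbf{B}$ the pointwise estimate
$$|\mathcal D[g](x)|=\frac{1}{\omega_{n-1}}\Big|\int_{\mathbf{B}}\Big(\frac{x-y}{|x-y|^n}-\frac{|y|^2x-y}{[x,y]^n}\Big)g(y)\,dy\Big|\le \mathcal N[|g|](x).$$
Taking $L^p$ norms and invoking the boundedness of $\mathcal N$ just established yields $\|\mathcal D[g]\|_p\le\|\mathcal N[|g|]\|_p\le\|\mathcal N\|_p\,\|g\|_p$, hence $\|\mathcal D\|_p\le\|\mathcal N\|_p$.

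The hard part is promoting this to a \emph{strict} inequality. Pointwise, equality $|\mathcal D[g](x)|=\mathcal N[|g|](x)$ can hold only if the vector $\frac{x-y}{|x-y|^n}-\frac{|y|^2x-y}{[x,y]^n}$ keeps a fixed direction as $y$ ranges over $\{g\neq 0\}$, which the explicit kernel never does; so for each fixed $g$ there is genuine loss on a set of positive measure and $\|\mathcal D[g]\|_p<\|\mathcal N\|_p\,\|g\|_p$. The obstacle is that the operator norms are suprema rather than maxima, so this per-$g$ strictness does not by itself separate the two norms. I would therefore argue by contradiction: a maximizing sequence $g_k$ with $\|g_k\|_p=1$ and $\|\mathcal D g_k\|_p\to\|\mathcal N\|_p$ would force simultaneously $\|\mathcal N[|g_k|]\|_p\to\|\mathcal N\|_p$ and the defect $\mathcal N[|g_k|]-|\mathcal D g_k|\to 0$ in $L^p$, and I would then extract a limiting profile and use the nonconstancy of the kernel direction to derive a contradiction. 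Controlling this limiting/compactness step is where the real difficulty lies, and it is the only place the argument departs from routine manipulation.
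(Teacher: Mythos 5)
Your two non-strict inequalities follow exactly the paper's route: the paper obtains this corollary in a single stroke by applying Riesz--Thorin to $\mathcal N$ with the endpoint norms from Theorem~\ref{thm} and Theorem~\ref{l11} (your computation $\theta=\frac{p-1}{p}$ is the right one), and the comparison $\|\mathcal D g\|_p\le\|\mathcal N[|g|]\|_p$ via the pointwise triangle-inequality domination of the vector kernel by the scalar kernel is the same device the paper uses for its $L^\infty$ and $L^1$ corollaries. So up to $\|\mathcal D\|_p\le\|\mathcal N\|_p\le\|\mathcal N\|_1^{1/p}\|\mathcal N\|_\infty^{(p-1)/p}$ your argument is complete and matches the source.

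The genuine gap is the strict inequality $\|\mathcal D\|_p<\|\mathcal N\|_p$, which you correctly isolate as the delicate point but leave unfinished: per-$g$ strictness does not pass to a supremum, and your contradiction sketch has no mechanism to produce the ``limiting profile'' (a weak limit of a maximizing sequence could a priori vanish, and $g\mapsto\|\mathcal D g\|_p$ is not weakly continuous without further input). The missing ingredient is compactness. Since $[x,y]^2=|x-y|^2+(1-|x|^2)(1-|y|^2)\ge|x-y|^2$, both kernels are dominated by $\frac{2}{\omega_{n-1}}|x-y|^{1-n}$, a weakly singular kernel, so $\mathcal D$ and $\mathcal N$ are compact on $L^p(\mathbf{B})$ for $1<p<\infty$ (in the spirit of Lemma~\ref{lema}). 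Compactness plus reflexivity of $L^p$ then yields a norm-attaining element: if $g_k\rightharpoonup g$ weakly along a maximizing sequence with $\|g_k\|_p=1$, then $\mathcal D g_k\to\mathcal D g$ strongly, so $\|\mathcal D\|_p=\|\mathcal D g\|_p$ with $g\ne 0$. For this \emph{fixed} $g$, equality $|\mathcal D g(x)|=\mathcal N[|g|](x)$ would force the direction of $y\mapsto H(x,y)$ (up to the sign of $g(y)$) to be a.e.\ constant on $\{g\ne 0\}$, which fails for a.e.\ $x$ --- at $x=0$, for instance, the kernel points in the direction $-y/|y|$, which is nonconstant on any set of positive measure --- and this nonconstancy claim, which you assert but do not verify, should be checked for general $x$ (real-analyticity of the kernel off its singularity makes this routine). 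Then $\|\mathcal D\|_p=\|\mathcal D g\|_p<\|\mathcal N[|g|]\|_p\le\|\mathcal N\|_p$ since $p<\infty$. It is worth noting that the paper itself asserts the strict inequality with no argument at all, so your flagging of this step identifies a real lacuna in the source as well; the compactness route above is the natural way to close it.
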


\begin{conjecture}
We know that ${\mathcal D},\mathcal{N}$ maps $ L^{p}(\mathbf{B})$ into $L^{\infty}(\mathbf{B})$ for $p>n$. We have that
$$\|{\mathcal N} g \|_{\infty}\leq  A_{p}\|g\|_{p},$$ and $$\|{\mathcal D} g \|_{\infty}\leq  B_{p}\|g\|_{p},$$  where $$A_p=\frac{1}{\omega_{n-1}}\sup_{x\in\mathbf{B}}\left(\int_{\mathbf{B}}\left|\frac{x-y}{|x-y|^{n}}-\frac{|y|^2x-y}{[x,y]^n}\right|^{q}dy\right)^{1/q},$$ and

$$B_p=\frac{1}{\omega_{n-1}}\sup_{x\in\mathbf{B},|\eta|=1}\left(\int_{\mathbf{B}}\left|\left<\frac{x-y}{|x-y|^{n}}-\frac{|y|^2x-y}{[x,y]^n},\eta\right>\right|^{q}
dy\right)^{1/q}.$$
  Then  we conjecture that $$A_p=\frac{1}{\omega_{n-1}}\left(\int_{\mathbf{B}}\left(\frac{1}{|y|^{n-1}}-|y|\right)^{q}dy\right)^{1/q}=\omega_{n-1}^{-\frac{1}{p}}\left(\frac{\Gamma[1+q] \Gamma\left[1+\left(-1+\frac{1}{n}\right) q\right]}{n \Gamma\left[2+\frac{q}{n}\right]}\right)^{1/q},$$ and

  \[\begin{split}B_p&=\frac{1}{\omega_{n-1}}\sup_{|\eta|=1}\left(\int_{\mathbf{B}}|\left<y,\eta\right>|^q\left(\frac{1}{|y|^{n}}-1\right)^{q}dy\right)^{1/q}\\&
  =\omega_{n-1}^{-\frac{1}{p}}\left(\frac{\Gamma\left[\frac{n}{2}\right] \Gamma[1+q] \Gamma\left[\frac{1}{2} (-1+n+q)\right] \Gamma\left[1+\left(-1+\frac{1}{n}\right) q\right]}{n \Gamma\left[\frac{1}{2} (-1+n)\right] \Gamma\left[\frac{n+q}{2}\right] \Gamma\left[2+\frac{q}{n}\right]}\right)^{1/q}.\end{split}\]
\end{conjecture}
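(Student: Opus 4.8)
The plan is to establish both asserted identities by the scheme that already succeeds for the $L^{\infty}$ norm in Theorem~\ref{thm}: transport the integral to the centre of the ball by a M\"obius substitution and then study the resulting radial function through its power series in $|x|^{2}$. Fix $p>n$ and let $q$ be its conjugate exponent, so that $q<n/(n-1)$; this is precisely the range in which the integrand at $x=0$, which behaves like $|y|^{-(n-1)q}$ near the origin, is integrable on $\mathbf B$. Since the operator inequalities $\|\mathcal N g\|_\infty\le A_p\|g\|_p$ and $\|\mathcal D g\|_\infty\le B_p\|g\|_p$ are just H\"older's inequality, the whole content of the statement is the evaluation of the two suprema, and by rotational invariance this reduces to showing that the radial function
\[
\Phi(\rho)=\int_{\mathbf B}\Big|\frac{x-y}{|x-y|^{n}}-\frac{|y|^{2}x-y}{[x,y]^{n}}\Big|^{q}\,dy,\qquad x=\rho e_{1},
\]
is maximised at $\rho=0$, and similarly for the supremum over $\eta$ defining $B_p$.

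First I would raise the identity \eqref{ide} to the $q$-th power and insert the Jacobian \eqref{deri}. Writing $y=T_{-x}z$, $z=r\xi$ with $r\in(0,1)$ and $\xi\in\mathbf S$, and using $[z,-x]=|rx+\xi|$, the same bookkeeping that produced the displayed formula in Theorem~\ref{thm} gives
\[
\Phi(\rho)=(1-\rho^{2})^{\,n-(n-1)q}\int_{0}^{1}r^{(n-1)(1-q)}\!\int_{\mathbf S}\frac{\big|(1-r^{n})\xi+r(1-r^{n-2})\rho e_{1}\big|^{q}}{|\xi+r\rho e_{1}|^{\,2n-(n-2)q}}\,d\xi\,dr.
\]
At $\rho=0$ this collapses to $\int_{\mathbf B}\big(|y|^{-(n-1)}-|y|\big)^{q}\,dy$, a single Beta integral whose evaluation produces exactly the Gamma expression claimed for $A_p$; for $B_p$ the extra factor $|\langle\xi,\eta\rangle|^{q}$ is present, and at $\rho=0$ the supremum over $\eta$ is attained at any fixed direction, yielding the stated $\langle y,\eta\rangle$-integral. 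Thus everything comes down to the inequality $\Phi(\rho)\le\Phi(0)$.

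The heart of the matter is the inner angular integral. Aligning $x=\rho e_{1}$ and reducing to an integral in $\int_{0}^{\pi}(\cdots)\sin^{n-2}t\,dt$, one is faced with
\[
\int_{0}^{\pi}\frac{\big(a^{2}+2ab\rho\cos t+b^{2}\rho^{2}\big)^{q/2}}{\big(1+2r\rho\cos t+r^{2}\rho^{2}\big)^{\,n-(n-2)q/2}}\,\sin^{n-2}t\,dt,\qquad a=1-r^{n},\ \ b=r(1-r^{n-2}).
\]
When $q=1$ the numerator is a single vector norm, and the triangle inequality splits it into a part proportional to $|rx+\xi|$ and a constant, after which each piece is evaluated by \eqref{formula}; this is exactly the mechanism driving Theorem~\ref{thm}. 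For non-integer $q$ that splitting is unavailable, and this is the main obstacle. My proposal is to expand the numerator by the Gegenbauer generating function $(1+2s\cos t+s^{2})^{q/2}=\sum_{k}C_{k}^{-q/2}(-\cos t)\,s^{k}$ with $s=b\rho/a<1$, so that each term is a polynomial in $\cos t$ multiplying the denominator; integrating term by term against \eqref{formula} and its parameter derivatives then yields a genuine power series $\Phi(\rho)=\sum_{m\ge0}c_{m}\rho^{2m}$ whose coefficients are finite sums of ratios of Gamma functions, with $c_{0}=\Phi(0)$ matching the claimed value. The statement is then equivalent to $c_{m}\le0$ for all $m\ge1$.

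The final and hardest step is to prove $c_{m}\le0$. Here I would imitate Theorem~\ref{thm}, where the analogous coefficients $a_{m}$ were shown negative by recasting $a_{m}<0$ as a bound $b_{m}<1$ on a quotient of Gamma functions and then dominating $b_{m}$ by a monotone expression with limit $<1$ via the Gamma-function inequality \eqref{(3.10)}. The difficulty is that for general $q$ the coefficient $c_{m}$ is no longer a single quotient but a double sum arising from the Gegenbauer expansion, so its sign cannot be read off from one application of \eqref{(3.10)}; controlling these sums uniformly in $m$, and in $r\in(0,1)$ before the $r$-integration, is exactly what keeps the assertion at the level of a conjecture. For $B_p$ the same programme applies with the additional weight $|\langle\xi,\eta\rangle|^{q}$, which only enriches the bookkeeping and fixes the extremal direction $\eta$ parallel to $x$, but does not remove the central obstruction.
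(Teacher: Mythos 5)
The statement you are trying to prove is labelled a \emph{conjecture} in the paper: the authors give no proof of it, so there is no argument of theirs to match, and the only question is whether your proposal actually settles the statement. It does not, and you say so yourself. The parts you do carry out are sound: the operator bounds $\|\mathcal N g\|_\infty\le A_p\|g\|_p$ and $\|\mathcal D g\|_\infty\le B_p\|g\|_p$ are indeed just H\"older's inequality; the range $q<n/(n-1)$ is exactly the integrability threshold for $|y|^{-(n-1)q}$; your M\"obius substitution via \eqref{ide} and \eqref{deri} correctly yields the formula for $\Phi(\rho)$ with numerator $|(1-r^n)\xi+r(1-r^{n-2})\rho e_1|^q$ and denominator exponent $2n-(n-2)q$; and the value $\Phi(0)=\int_{\mathbf B}(|y|^{1-n}-|y|)^q\,dy$ reduces by the substitution $u=r^n$ to a Beta integral that reproduces the conjectured Gamma quotient for $A_p$ (and analogously, with the angular factor $|\langle\xi,\eta\rangle|^q$, for $B_p$). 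So you have correctly isolated the true content of the conjecture: the inequality $\Phi(\rho)\le\Phi(0)$, plus the extremality of $\eta$ in the $B_p$ case.

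That inequality is precisely what your proposal does not prove. The Gegenbauer expansion of the numerator is a legitimate device for $|s|<1$, but it converts $\Phi$ into a series whose coefficients $c_m$ are double sums of Gamma quotients, and you offer no mechanism for establishing $c_m\le 0$ --- you explicitly concede that the analogue of the paper's $b_m<1$ argument via \eqref{(3.10)} (the engine of Theorem~\ref{thm}) does not extend, which is an admission that the decisive step is missing, not a proof of it. Two further soft spots: term-by-term integration of polynomial-in-$\cos t$ numerators against the denominator requires contiguous-relation bookkeeping beyond \eqref{formula} and its ``parameter derivatives,'' and this is not set up; and your claim that the conjecture is \emph{equivalent} to $c_m\le0$ for all $m\ge1$ is too strong --- coefficient non-positivity is sufficient for the maximum of $\Phi$ at $\rho=0$ but not necessary, so even a disproof of $c_m\le0$ would not refute the statement. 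In sum, what you have written is a plausible research programme faithfully modelled on the proof of Theorem~\ref{thm}, with the correct reduction and the correct closed-form evaluation at the conjectured extremal point, but it leaves the conjecture exactly as open as the paper does.
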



\begin{thebibliography}{99}
\bibitem{adn}
\textsc{S. Agmon, A. Douglis, and L. Nirenberg,} {\it Estimates near the
boundary for solutions of elliptic partial differential equations
satisfying general boundary conditions}. I, Comm. Pure Appl. Math.
{\bf 12} (1959), 623--727.

\bibitem{Ahl}
\textsc{L. V. Ahlfors:} \emph{M\"obius transformations in several dimensions}
University of Minnesota, School of Mathematics, 1981,  150~p.


\bibitem{ah}

\textsc{J. M.~Anderson, A.~Hinkkanen,} {\it The Cauchy transform on bounded
domains.} Proc. Amer. Math. Soc. {\bf 107} (1989), no. 1, 179--185.
\bibitem{akl}
\textsc{J. M. Anderson, D. Khavinson; V. Lomonosov,} {\it Spectral properties
of some integral operators arising in potential theory.} Quart. J.
Math. Oxford Ser. (2) 43 (1992), no. 172, 387--407.


\bibitem{dost}
\textsc{M. Dostani\'c,} {\it Norm estimate of the Cauchy transform on
$L^p(\Omega)$.} Integral Equations Operator Theory {\bf 52} (2005),
no. 4, 465--475.
\bibitem{dost1}
\textsc{M. Dostani\'c,} {\it Estimate of the second term in the spectral
asymptotic of Cauchy transform.}  J. Funct. Anal.  249 (2007),  no.
1, 55--74.
\bibitem{dost2} \textsc{M. Dostani\'c,} \emph{The properties of the Cauchy transform on a bounded
domain}, Journal of the Operator Theory {\bf 36} (1996), 233--247

\bibitem{drag}
\textsc{S.S. Dragomir, R.P.  Agarwal, N. S. Barnett,}
\emph{Inequalities for Beta and Gamma functions via some classical and new integral inequalities}. (English)
J. Inequal. Appl. 5, No.2, 103-165 (2000).


\bibitem{gt} \textsc{ D. Gilbarg and N. Trudinger,} \emph{Elliptic Partial
Differential Equations of Second Order}, Second edition. Grundlehren
der Mathematischen Wissenschaften [Fundamental Principles of
Mathematical Sciences], 224. Springer-Verlag, Berlin, 1983. xiii+513
pp.


\bibitem{Kalaj} {\sc D.\,Kalaj}, {\em On Some Integral Operators Related to the Poisson Equation}, Integral Equation and Operator Theory 72 (2012), 563-575.

\bibitem{Kalaj1} {\sc D.\,Kalaj}: \emph{Cauchy transform and Poisson's equation}, Advances in Mathematics, Volume 231, Issue 1, 10 September 2012, Pages 213-242.

\bibitem{Kalaj 2}{\sc D.\,Kalaj, Dj.\,Vujadinovi\'c}: \emph{The solution operator of the inhomogeneous Dirichlet problem in the unit ball,}
Proc. Amer. Math. Soc. 144 (2016), 623-635

\bibitem{trans} \textsc{D. Kalaj, M. Pavlovi\'c}: {\it On quasiconformal self-mappings of the unit disk satisfying the Poisson's equation},
Trans. Amer. Math. Soc. \textbf{363} (2011) 4043--4061.

\bibitem{pbm} \textsc{A. P. Prudnikov, Yu. A. Brychkov and O. I. Marichev}, {\it Integrals
and Series}, Vol. 1: Elementary Functions. Gordon and Breach Sci.
Publ., New York, 1986.


\bibitem{thorin}
\textsc{G. Thorin}, {\it Convexity theorems generalizing those of M. Riesz
and Hadamard with some applications.}  Comm. Sem. Math. Univ. Lund
[Medd. Lunds Univ. Mat. Sem.] {\bf 9},  (1948), 1--58.


\end{thebibliography}
\end{document}